\newtheorem{theorem}{Theorem}[section]
\newtheorem{lemma}[theorem]{Lemma}
\theoremstyle{definition}
\newtheorem{remark}[theorem]{Remark}
\numberwithin{equation}{section}
\begin{document}
\setcounter{page}{1}
\setcounter{firstpage}{1}
\setcounter{lastpage}{11}
\renewcommand{\currentvolume}{56}
\renewcommand{\currentyear}{2011}
\renewcommand{\currentissue}{2}
\title[A characterization theorem and its applications for]{A characterization theorem  and its \\applications for $d$-orthogonality
of\\ Sheffer polynomial sets}
\author{Serhan Varma}
\address{Ankara University Faculty of Science, Department of Mathematics,
Tando\u{g}an TR-06100, Ankara, Turkey.}
\email{svarma@science.ankara.edu.tr}

\subjclass{33C45; 42C05}
\keywords{Orthogonal polynomials, $d$-Orthogonal polynomials, Sheffer
polynomials, Generating function, Linear functionals.}
\begin{abstract}
The purpose of this paper is to find the characterization of the Sheffer
polynomial sets satisfying the $d$-orthogonality conditions. The generating
function form of these polynomial sets is given in Theorem 2.2. As
applications of the Theorem 2.2, we revisit the $d$-orthogonal polynomial sets
exist in the literature and discover new $d$-orthogonal polynomial sets.
Moreover, we obtain the $d$-dimensional functional vector ensuring the $d$%
-orthogonality of these new polynomial sets.
\end{abstract}
\maketitle

\section{Introduction}
Recently, the generalization of orthogonal polynomials called "$d$%
-orthogonal polynomials" have attracted so much attention from many authors.
The well-known properties of orthogonal polynomials such as recurrence
relations, Favard theorem, generating function relations and differential
equations have found correspondence in this new notion. New polynomial sets
which contain classical orthogonal polynomials have been created so far. Let
us give a brief summary of d-orthogonal polynomials.

Let $\mathcal{P}$ be the vector space of polynomials with complex
coefficients and $\mathcal{P}^{^{\prime }}$be the vector space of all linear
functionals on $\mathcal{P}$ called the algebraic dual. $\left\langle
u,f\right\rangle $ is the representation of the effect of any linear
functional $u\in \mathcal{P}^{^{\prime }}$ to the polynomial $f\in \mathcal{P%
}$. Let $\left\{ P_{n}\right\} _{n\geq 0}$ be a polynomial set ($\deg \left(
P_{n}\right) =n$ for all non-negative integer $n$), and corresponding dual
sequence $\left( u_{n}\right) _{n\geq 0}$ for polynomials taken from this
set can be given by
\begin{equation*}
\left\langle u_{n},P_{k}\right\rangle =\delta _{nk}\text{ \ \ \ , \ \ \ }%
n,k=0,1,...,\text{\ }
\end{equation*}%
where $\delta _{nk}$ is the Kronecker delta.

A polynomial set $\left\{ P_{n}\right\} _{n\geq 0}$ in $\mathcal{P}$ is said
to be $d$-orthogonal polynomial set with respect to the $d$-dimensional
functional vector $\Gamma =^{t}\left( u_{0},u_{1},...,u_{d-1}\right) $ if
the following orthogonality conditions are hold
\begin{equation}
\left\{
\begin{array}{cc}
\left\langle u_{k},P_{n}P_{m}\right\rangle =0\text{ ,} & m>nd+k\text{ ,} \\
\left\langle u_{k},P_{n}P_{nd+k}\right\rangle \neq 0\text{ ,} & n\geq 0\text{
,}%
\end{array}%
\right.  \label{1}
\end{equation}%
where $n\in
\mathbb{N}
_{0}=\left\{ 0,1,2,...\right\} $, $d$ is a positive integer and $k\in
\left\{ 0,1,...,d-1\right\} $ $($see $\left[ 1-2\right] )$. Characterization
of these polynomials by recurrence relations and Favard type theorem was
also given in $\left[ 1-2\right] $. A polynomial set $\left\{ P_{n}\right\}
_{n\geq 0}$ is a $d$-orthogonal polynomial set if and only if it fulfills a $%
\left( d+1\right) $-order recurrence relation of the type%
\begin{equation}
xP_{n}\left( x\right) =\sum\limits_{k=0}^{d+1}\alpha _{k,d}\left( n\right)
P_{n-d+k}\left( x\right) \text{ \ \ \ , \ \ }n\in
\mathbb{N}
_{0}\text{ \ ,}  \label{2}
\end{equation}%
with the regularity conditions $\alpha _{d+1,d}\left( n\right) \alpha
_{0,d}\left( n\right) \neq 0$ , $n\geq d$ and by convention $P_{-n}\left(
x\right) =0$ , $n\geq 1$. Taking $d=1$ in $\left( \ref{1}\right) $ and $%
\left( \ref{2}\right) $ leads us to the celebrated notion of orthogonal
polynomials $\left( \text{see \cite{3}}\right) $.

The recurrence relation of order $d+1$ has been the main reason of deriving
many $d$-orthogonal polynomial sets as an extension of known ones in
orthogonal polynomials. Classical orthogonal polynomials such as Laguerre,
Hermite and Jacobi polynomials, discrete orthogonal polynomials like
Charlier, Meixner polynomials and so on were extended to the $d-$%
orthogonality notion and many basic properties linking with these
polynomials were stated by various authors $\left( \left[ 4-23\right]
\right) $. Especially, in \cite{14}, the authors described a useful method
for checking whether if a given polynomial set $\left\{ P_{n}\right\}
_{n\geq 0}$ is $d$-orthogonal or not. This method will be described and used
in the sequel.

A polynomial set $\left\{ P_{n}\right\} _{n\geq 0}$ is called Sheffer
polynomial set if and only if it has the generating function of the form
\begin{equation}
A\left( t\right) e^{xH\left( t\right) }=\sum\limits_{n=0}^{\infty
}P_{n}\left( x\right) \frac{t^{n}}{n!}\text{ \ }  \label{3}
\end{equation}%
where $A\left( t\right) $ and $H\left( t\right) $ have the power series
expansions as following
\begin{equation*}
A\left( t\right) =\sum\limits_{k=0}^{\infty }a_{k}t^{k}\text{ \ , \ }%
a_{0}\neq 0\text{ , }H\left( t\right) =\sum\limits_{k=0}^{\infty
}h_{k}t^{k+1}\text{ \ , \ }h_{0}\neq 0\text{ .}
\end{equation*}%
This means that $A\left( t\right) $ is invertible and $H\left( t\right) $
has a compositional inverse. There are numerous polynomial sets belong to
the class of Sheffer polynomials $\left( \text{see }\left[ 24-25\right]
\right) $. Note that, for $H\left( t\right) =t$, we meet the definition of
Appell polynomial sets \cite{25} from the aspect of generating functions.
That is to say, Appell polynomials can be defined by generating function of
the type%
\begin{equation*}
A\left( t\right) e^{xt}=\sum\limits_{n=0}^{\infty }P_{n}\left( x\right)
\frac{t^{n}}{n!}\text{ \ \ }
\end{equation*}%
with $A\left( t\right) =\sum\limits_{k=0}^{\infty }a_{k}t^{k}$ $\left(
a_{0}\neq 0\right) $. In this contribution, our aim is to find the exact
form of $d$-orthogonal polynomial sets which are at the same time Sheffer
polynomial set. Then, we try to derive new $d$-orthogonal polynomial sets
and find some of them's $d$-dimensional functional vector for which promises
the $d$-orthogonality. Also, we revisit some known $d$-orthogonal polynomial
sets exist in the literature.
\section{Main Results}

Characterization problems related to Sheffer polynomial set have a deep
history. Both Meixner \cite{26} and Sheffer \cite{27} interested in the same
problem: what is the all possible forms of polynomial sets which are at the
same time orthogonal and Sheffer polynomials. They stated that $A\left(
t\right) $ and $H\left( t\right) $ of $\left( \ref{3}\right) $ should
satisfy the following conditions
\begin{eqnarray*}
\frac{1}{H^{^{\prime }}\left( t\right) } &=&\left( 1-\alpha t\right) \left(
1-\beta t\right) \text{ \ \ \ ,} \\
\frac{A^{^{\prime }}\left( t\right) }{A\left( t\right) } &=&\frac{\lambda
_{2}t}{\left( 1-\alpha t\right) \left( 1-\beta t\right) }\text{ \ \ .}
\end{eqnarray*}%
If we discuss all possible cases in view of these two conditions, then we
face with the known orthogonal polynomial sets listed below:

\textbf{Case 1: }$\alpha =\beta =0\Rightarrow $ Hermite polynomials.

\textbf{Case 2: }$\alpha =\beta \neq 0\Rightarrow $ Laguerre polynomials.

\textbf{Case 3: }$\alpha \neq 0$, $\beta =0\Rightarrow $ Charlier
polynomials.

\textbf{Case 4: }$\alpha \neq \beta $, $\left( \alpha \text{, }\beta \in
\mathbb{R}
\right) \Rightarrow $ Meixner polynomials.

\textbf{Case 5: }$\alpha \neq \beta $, $\left( \text{complex conjugate of
each other}\right) \Rightarrow $ Meixner-Pollaczek polynomials.

For more information see \cite{28}. Similar investigation was made in \cite%
{29} for $2$-orthogonal polynomials. In order to solve a characterization
problem for $d$-orthogonality, we need the following lemma.

\begin{lemma}
$\left( \text{\cite{14}}\right) $ Let $\left\{ P_{n}\right\} _{n\geq 0}$ be
a polynomial set defined by the following relation
\begin{equation*}
G\left( x,t\right) =A\left( t\right) G_{0}\left( x,t\right)
=\sum\limits_{k=0}^{\infty }P_{n}\left( x\right) \frac{t^{n}}{n!}
\end{equation*}%
with $G_{0}\left( 0,t\right) =1$ and let $\hat{X}_{t}$ and $\sigma :=\hat{T}%
_{x}$ be the transform operator of the multiplication operator by $x$ and $t$%
, respectively. Thus,
\begin{equation}
\left\{
\begin{array}{c}
\hat{X}_{t}G\left( x,t\right) =xG\left( x,t\right) \text{ \ ,} \\
\hat{T}_{x}G\left( x,t\right) =tG\left( x,t\right) \text{ \ .}%
\end{array}%
\right.  \label{4}
\end{equation}%
Then, $\left\{ P_{n}\right\} _{n\geq 0}$ is a $d$-orthogonal polynomial set
if and only if $\hat{X}_{t}\in \vee _{d+2}^{\left( -1\right) }$ where the
action of the set of operators $\tau \in \vee _{r}^{\left( -1\right) }$, $%
\left( r\geq 2\right) $, to $t^{n}$ is%
\begin{equation*}
\left\{
\begin{array}{cc}
\tau \left( 1\right) =\sum\limits_{k=1}^{r-1}\alpha _{k-1}^{\left( k\right)
}t^{k-1}\text{ \ ,} &  \\
\tau \left( t^{n}\right) =\sum\limits_{k=0}^{r-1}\alpha _{n}^{\left(
k\right) }t^{n+k-1}\text{ \ ,} & n\geq 1\text{ \ .}%
\end{array}%
\right.
\end{equation*}%
Here, $r$ complex sequences $\left\{ \alpha _{n}^{\left( k\right) }\right\}
_{n\geq 0}$ appear for $k=0,1,...,r-1$ with the condition $\alpha
_{n}^{\left( 0\right) }\alpha _{n}^{\left( r-1\right) }\neq 0$. Moreover,
the $d$-dimensional functional vector which ensures the $d$-orthogonality is
given by
\begin{equation}
\left\langle u_{i},f\right\rangle =\frac{1}{i!}\left[ \frac{\sigma ^{i}}{%
A\left( \sigma \right) }f\left( x\right) \right] _{x=0}\text{ \ , \ \ }%
i=0,1,...,d-1\text{ \ , \ }f\in \mathcal{P}\text{ \ .}  \label{5}
\end{equation}
\end{lemma}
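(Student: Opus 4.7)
The plan is to reduce the operator-theoretic statement to the recurrence-based characterization $(\ref{2})$, which by the $d$-orthogonal Favard theorem cited in the introduction is equivalent to $d$-orthogonality. The starting observation is that the two expressions for $\hat{X}_{t}G(x,t)$ must agree as formal power series in $t$: on one hand $\hat{X}_{t}G(x,t)=xG(x,t)=\sum_{n\geq 0}xP_{n}(x)\,t^{n}/n!$, and on the other hand, applying $\hat{X}_{t}$ term by term to $\sum_{n\geq 0}P_{n}(x)\,t^{n}/n!$ and using the defining action of $\vee_{d+2}^{(-1)}$ yields a series whose coefficient of $t^{n}/n!$ is a linear combination of $P_{n-d}(x),\ldots,P_{n+1}(x)$ with coefficients proportional to $\alpha_{n}^{(k)}$. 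Equating the two gives precisely a recurrence of the form $(\ref{2})$ with $\alpha_{k,d}(n)$ determined by $\alpha_{n}^{(k)}$ (and the factorials), and the condition $\alpha_{n}^{(0)}\alpha_{n}^{(r-1)}\neq 0$ translates directly into the regularity $\alpha_{0,d}(n)\alpha_{d+1,d}(n)\neq 0$. Conversely, any such recurrence can be read backwards to produce an element of $\vee_{d+2}^{(-1)}$. This establishes the equivalence.

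For the functional vector formula $(\ref{5})$, I would first verify the dual relation $\langle u_{i},P_{n}\rangle=\delta_{in}$ by a generating function computation: packaging the definition of $u_{i}$ into the series
\begin{equation*}
\sum_{n\geq 0}\langle u_{i},P_{n}\rangle\,\frac{t^{n}}{n!}=\frac{1}{i!}\left[\frac{\sigma^{i}}{A(\sigma)}G(x,t)\right]_{x=0},
\end{equation*}
and then using that $\sigma=\hat{T}_{x}$ acts on $G(x,t)$ as multiplication by $t$, so $\sigma^{i}/A(\sigma)$ applied to $G(x,t)=A(t)G_{0}(x,t)$ produces $t^{i}G_{0}(x,t)$. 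Since $G_{0}(0,t)=1$, evaluation at $x=0$ gives $t^{i}$, which forces $\langle u_{i},P_{n}\rangle=\delta_{in}$ as required.

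The final step is to derive the bilinear orthogonality $(\ref{1})$ from the dual property together with the banded structure of multiplication by $x$. The recurrence $(\ref{2})$ says that in the basis $\{P_{j}\}$, the matrix of multiplication by $x$ has upper bandwidth $1$ and lower bandwidth $d$, so $x^{j}P_{m}$ expands over indices in $[m-jd,m+j]$. Writing $P_{n}(x)=\sum_{j=0}^{n}a_{j,n}x^{j}$ with $a_{n,n}\neq 0$ and iterating, the expansion of $P_{n}P_{m}$ in $\{P_{\ell}\}$ involves only $\ell\geq m-nd$, which immediately yields $\langle u_{k},P_{n}P_{m}\rangle=0$ for $m>nd+k$. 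For $m=nd+k$ with $0\leq k\leq d-1$, the coefficient of $P_{k}$ is contributed only by $j=n$ and equals $a_{n,n}\prod_{\nu}\alpha_{0,d}(\nu)$ up to normalization, nonzero by the regularity condition.

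The main obstacle I anticipate is the bookkeeping in the first paragraph: making sure that the factorials introduced by the $t^{n}/n!$ normalization match up correctly when the action of $\hat{X}_{t}$ is compared with the coefficients in $(\ref{2})$, and identifying the edge cases at $n=0,1,\ldots,d-1$ (where the lower-index terms $P_{n-d+k}$ would be negative and must vanish by the convention $P_{-n}\equiv 0$). The rest of the argument is essentially generating-function manipulation and a banded-matrix expansion, both routine.
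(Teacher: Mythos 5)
The paper does not actually prove this lemma: it is quoted verbatim from \cite{14} (Ben Cheikh--Zaghouani) and used as a black box throughout, so there is no in-paper argument to measure your proposal against. Judged on its own terms, your plan is sound and is essentially the standard derivation. The first step is correct: writing $xG(x,t)=\hat{X}_{t}G(x,t)$, applying the $\vee_{d+2}^{(-1)}$ action to each $t^{n}$ and comparing coefficients of $t^{m}$ gives $\frac{1}{m!}xP_{m}(x)=\sum_{k=0}^{d+1}\frac{\alpha_{m+1-k}^{(k)}}{(m+1-k)!}P_{m+1-k}(x)$, which is exactly the $(d+1)$-order recurrence $(\ref{2})$ after relabelling, with the regularity condition $\alpha_{d+1,d}(n)\alpha_{0,d}(n)\neq 0$ coming from $\alpha_{n}^{(0)}\alpha_{n}^{(d+1)}\neq 0$; the Favard-type theorem of \cite{1}--\cite{2}, which the paper itself takes as known, then closes the equivalence, and your converse reading of the recurrence back into an operator is equally routine. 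Your verification of $(\ref{5})$ is the right mechanism: since $\sigma G=tG$, any formal series in $\sigma$ applied to $G$ becomes the same series in $t$, so $\frac{\sigma^{i}}{A(\sigma)}G=t^{i}G_{0}(x,t)$ and evaluation at $x=0$ with $G_{0}(0,t)=1$ yields $\langle u_{i},P_{n}\rangle=\delta_{in}$. The only point needing explicit care there is that $1/A(\sigma)$ must be read as a formal power series in the lowering operator $\sigma$ (well defined on $\mathcal{P}$ because $a_{0}\neq 0$ and $\sigma$ strictly lowers degree, so the series terminates on every polynomial), which also legitimises the termwise interchange. Your third paragraph, deriving $(\ref{1})$ directly from the banded structure of multiplication by $x$ and the dual relations, is correct but logically redundant once the Favard-type theorem has been invoked --- it amounts to reproving its easy half; it does make the argument more self-contained. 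Finally, note that the indexing of $\tau(1)$ in the lemma (subscripts $\alpha_{k-1}^{(k)}$ rather than $\alpha_{0}^{(k)}$) is a convention inherited from \cite{14} that you will need to absorb into the ``bookkeeping'' you already flagged, together with the boundary cases $P_{-n}\equiv 0$; neither affects the substance of the argument.
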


From $\left( \ref{4}\right) $, It is obvious that $\sigma :=\hat{T}_{x}$ is
the lowering operator of $\left\{ P_{n}\right\} _{n\geq 0}$. Now, we can
state our main theorem.

\begin{theorem}
Let $\gamma _{d}\left( t\right) =\sum\limits_{k=0}^{d}\beta _{k}t^{k}$ be a
polynomial of degree $d$ $\left( \beta _{d}\neq 0\right) $ and $\sigma
_{d+1}\left( t\right) =\sum\limits_{k=0}^{d+1}\alpha _{k}t^{k}$ be a
polynomial of degree less than or equal to $d+1$. The only polynomial sets $%
\left\{ P_{n}\right\} _{n\geq 0}$ ,which are $d$-orthogonal and also Sheffer
polynomial set, are generated by
\begin{equation}
\exp \left[ \int\limits_{0}^{t}\frac{\gamma _{d}\left( s\right) }{\sigma
_{d+1}\left( s\right) }ds\right] \exp \left[ x\int\limits_{0}^{t}\frac{1}{%
\sigma _{d+1}\left( s\right) }ds\right] =\sum\limits_{n=0}^{\infty
}P_{n}\left( x\right) \frac{t^{n}}{n!}  \label{6}
\end{equation}%
with the conditions%
\begin{equation}
\alpha _{0}\left( n\alpha _{d+1}-\beta _{d}\right) \neq 0\text{ \ },\text{ \
\ }n\geq 1\text{ \ \ .}  \label{8}
\end{equation}
\end{theorem}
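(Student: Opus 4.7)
The plan is to apply Lemma 2.1 to the Sheffer generating function $G(x,t) = A(t)e^{xH(t)}$, which requires me to compute the operator $\hat X_t$ explicitly in the $t$-variable and then translate the condition $\hat X_t \in \vee_{d+2}^{(-1)}$ into structural conditions on $A$ and $H$.

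First I would differentiate $G$ with respect to $t$ to obtain
\[
\partial_t G(x,t) = \Bigl[\frac{A'(t)}{A(t)} + x H'(t)\Bigr] G(x,t),
\]
from which, solving for $xG$, one reads off
\[
\hat X_t \;=\; \frac{1}{H'(t)}\,\partial_t \;-\; \frac{A'(t)}{A(t)H'(t)}.
\]
Applying this to $t^n$ gives, for $n\geq 1$,
\[
\hat X_t(t^n) = n\,t^{n-1}\,\frac{1}{H'(t)} \;-\; t^n\,\frac{A'(t)}{A(t)H'(t)},
\]
and $\hat X_t(1) = -A'(t)/(A(t)H'(t))$.

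Next I would compare with the template in Lemma 2.1: for $r=d+2$ the image $\hat X_t(t^n)$ must be a polynomial supported on the monomials $t^{n-1},\ldots,t^{n+d}$, with coefficients depending on $n$ only through the linear parameter appearing in the formula above. Since the dependence on $n$ is already linear, this forces both $1/H'(t)$ and $A'(t)/(A(t)H'(t))$ to be polynomials in $t$, with degrees $\leq d+1$ and $\leq d$, respectively. I would therefore set
\[
\sigma_{d+1}(t) := \frac{1}{H'(t)}, \qquad \gamma_d(t) := \sigma_{d+1}(t)\,\frac{A'(t)}{A(t)},
\]
so that $\hat X_t(t^n) = n\,\sigma_{d+1}(t)\,t^{n-1} - \gamma_d(t)\,t^n$. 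From $H(t)=h_0 t+\cdots$ with $h_0\neq 0$ we get $\sigma_{d+1}(0)=\alpha_0=1/h_0\neq 0$. The conditions $\beta_d\neq 0$ and (2.8) would then come from the regularity requirement $\alpha_n^{(0)}\alpha_n^{(d+1)}\neq 0$ in Lemma 2.1: the bottom coefficient of $\hat X_t(t^n)$ is $n\alpha_0$ and the top coefficient (of $t^{n+d}$) is $n\alpha_{d+1}-\beta_d$, yielding precisely $\alpha_0(n\alpha_{d+1}-\beta_d)\neq 0$; applying the regularity condition at $n=0$ (where $\hat X_t(1)=-\gamma_d(t)$ must have nontrivial top coefficient $-\beta_d$) forces $\beta_d\neq 0$.

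Finally, since $\sigma_{d+1}(0)\neq 0$, the differential equations $H'(t)=1/\sigma_{d+1}(t)$ and $A'(t)/A(t) = \gamma_d(t)/\sigma_{d+1}(t)$ are integrable near $t=0$; together with $H(0)=0$ and $A(0)=a_0$ (which only rescales the generating function), integration yields
\[
H(t)=\int_0^t\frac{ds}{\sigma_{d+1}(s)}, \qquad A(t)=\exp\Bigl[\int_0^t\frac{\gamma_d(s)}{\sigma_{d+1}(s)}ds\Bigr],
\]
reproducing (2.7). Conversely, any choice of $\sigma_{d+1}$ and $\gamma_d$ satisfying (2.8) yields an operator $\hat X_t$ of the required form, so by Lemma 2.1 the generated polynomial set is $d$-orthogonal. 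The only real obstacle is a conceptual one, namely recognizing that the linear-in-$n$ structure of $\hat X_t$ on monomials immediately separates the two unknown rational functions and forces both to be polynomials of the stated degrees; once that observation is made, the remaining steps are a direct translation.
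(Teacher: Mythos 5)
Your proposal is correct and follows essentially the same route as the paper: differentiate $G(x,t)=A(t)e^{xH(t)}$ in $t$ to read off $\hat X_t=\frac{1}{H'(t)}D_t-\frac{A'(t)}{A(t)H'(t)}$, invoke Lemma 2.1 to force $1/H'$ and $A'/(AH')$ to be polynomials of degrees $\leq d+1$ and $d$, and integrate to obtain (2.6), with the converse direction running the same computation backwards. The only difference is that you spell out explicitly how the regularity condition $\alpha_n^{(0)}\alpha_n^{(d+1)}\neq 0$ yields the bottom coefficient $n\alpha_0$ and top coefficient $n\alpha_{d+1}-\beta_d$, hence (2.8), a verification the paper simply asserts.
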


\begin{proof}
Let $\left\{ P_{n}\right\} _{n\geq 0}$ be a Sheffer polynomial set defined
by the generating function $\left( \ref{3}\right) $. Taking the derivative
of the both sides of the following equality
\begin{equation*}
G\left( x,t\right) =A\left( t\right) e^{xH\left( t\right) }
\end{equation*}%
with respect to $t$ leads to%
\begin{equation*}
\left[ \frac{1}{H^{^{\prime }}\left( t\right) }D_{t}-\frac{A^{^{\prime
}}\left( t\right) }{A\left( t\right) H^{^{\prime }}\left( t\right) }\right]
G\left( x,t\right) =xG\left( x,t\right)
\end{equation*}%
where $D_{t}=\frac{d}{dt}$. Thus, from $\left( \ref{4}\right) $%
\begin{equation*}
\hat{X}_{t}=\frac{1}{H^{^{\prime }}\left( t\right) }D_{t}-\frac{A^{^{\prime
}}\left( t\right) }{A\left( t\right) H^{^{\prime }}\left( t\right) }\text{ \
.}
\end{equation*}%
If $\left\{ P_{n}\right\} _{n\geq 0}$ is a $d$-orthogonal polynomial set,
according to Lemma 2.1, $\hat{X}_{t}$ should belong to the set of operators $%
\vee _{d+2}^{\left( -1\right) }$. This means that following equalities must
be satisfied%
\begin{equation}
\left\{
\begin{array}{c}
\frac{1}{H^{^{\prime }}\left( t\right) }=\sum\limits_{k=0}^{d+1}\alpha
_{k}t^{k}=\sigma _{d+1}\left( t\right)  \\
\frac{A^{^{\prime }}\left( t\right) }{A\left( t\right) H^{^{\prime }}\left(
t\right) }=\sum\limits_{k=0}^{d}\beta _{k}t^{k}=\gamma _{d}\left( t\right)
\text{ \ , \ \ }\beta _{d}\neq 0\text{ \ ,}%
\end{array}%
\right.   \label{7}
\end{equation}%
with the conditions $\left( \ref{8}\right) $. Solving equations $\left( \ref%
{7}\right) $ allows us to get the desired result given by $\left( \ref{6}%
\right) $.

Conversely, Let $\left\{ P_{n}\right\} _{n\geq 0}$ be a Sheffer polynomial
set generated by $\left( \ref{6}\right) $ with the conditions $\left( \ref{8}%
\right) $. Thus,%
\begin{equation*}
G\left( x,t\right) =\exp \left[ \int\limits_{0}^{t}\frac{\gamma _{d}\left(
s\right) }{\sigma _{d+1}\left( s\right) }ds\right] \exp \left[
x\int\limits_{0}^{t}\frac{1}{\sigma _{d+1}\left( s\right) }ds\right] \text{
\ \ .}
\end{equation*}%
If we apply derivative operator $D_{t}$ to the both sides of the above
equality, then we obtain%
\begin{equation*}
\left[ \sigma _{d+1}\left( t\right) D_{t}-\gamma _{d}\left( t\right) \right]
G\left( x,t\right) =xG\left( x,t\right) \text{ \ \ .}
\end{equation*}%
In view of Lemma 2.1 and the conditions $\left( \ref{8}\right) $
\begin{equation*}
\hat{X}_{t}=\sigma _{d+1}\left( t\right) D_{t}-\gamma _{d}\left( t\right)
\in \vee _{d+2}^{\left( -1\right) }\text{ \ ,}
\end{equation*}%
so $\left\{ P_{n}\right\} _{n\geq 0}$ is a $d$-orthogonal polynomial set.
\end{proof}

\begin{remark}
This characterization of $d$-orthogonal Sheffer polynomial sets seems to be
new in this notion. For $d=1$, these results reduce to the ones obtained for
orthogonal polynomials which are summarized in the beginning of this section.
\end{remark}

Next, as applications of Theorem 2.2, we revisit some known $d$-orthogonal
polynomial sets which are at the same time Sheffer polynomial sets. Also, we
derive new $d$-orthogonal polynomial sets and find their $d$-dimensional
functional vector.

$\left( i\right) $ \textbf{Laguerre type }$d$\textbf{-orthogonal polynomial
sets}

During the last decade, authors have paid so much attention to extend
Laguerre polynomials to $d$-orthogonality. Thus, there are several
extensions of Laguerre polynomials in the context of $d$-orthogonality$%
\left( \text{see \cite{7}, \cite{14} and \cite{22}}\right) $. Now, taking
Theorem 2.2 into account, we revisit some of them which are explicitly
obtained from the couple of polynomials $\left[ \gamma _{d}\left( t\right)
\text{, }\sigma _{d+1}\left( t\right) \right] $.

\textbf{Application 1: }Let $\left\{ P_{n}\right\} _{n\geq 0}$ be a Sheffer
polynomial set generated by $\left( \ref{6}\right) $ with the following
couple of polynomials
\begin{equation*}
\left[ \gamma _{d}\left( t\right) \text{, }\sigma _{d+1}\left( t\right) %
\right] =\left[ -\left( \alpha +1\right) \left( 1-t\right) ^{d}\text{, }%
\frac{-1}{d}\left( 1-t\right) ^{d+1}\right] \text{ \ \ .}
\end{equation*}%
where $\alpha \neq -1$. After some calculations, thanks to Theorem 2.2, we
obtain a $d$-orthogonal polynomial set of the form
\begin{equation}
\left( 1-t\right) ^{-\left( \alpha +1\right) d}\exp \left\{ -x\left[ \left(
1-t\right) ^{-d}-1\right] \right\} =\sum\limits_{n=0}^{\infty }P_{n}\left(
x\right) \frac{t^{n}}{n!}  \label{9}
\end{equation}%
with the conditions $\frac{n}{d}+\alpha +1\neq 0$. The $d$-orthogonality of
this polynomial set deeply investigated in \cite{22}. Also, the authors
stated basic properties of these polynomials.

\textbf{Application 2: }Assume that $\left\{ P_{n}\right\} _{n\geq 0}$ is a
Sheffer polynomial set which has the generating function of the form $\left( %
\ref{6}\right) $ due to the couple of polynomials given below%
\begin{equation*}
\left[ \gamma _{d}\left( t\right) \text{, }\sigma _{2}\left( t\right) \right]
=\left[ -\left( 1-t\right) ^{2}\pi _{d-1}^{^{\prime }}\left( t\right)
-\left( \alpha +1\right) \left( 1-t\right) \text{, }-\left( 1-t\right) ^{2}%
\right] \text{ \ \ .}
\end{equation*}%
Here $\pi _{d-1}\left( t\right) =\sum\limits_{k=0}^{d-1}a_{k}t^{k}$ with $%
a_{d-1}\neq 0$. According to Theorem 2.2, necessary computations lead us to
get $d$-orthogonal polynomial sets of the type%
\begin{equation}
e^{\pi _{d-1}\left( t\right) }\left( 1-t\right) ^{-\alpha -1}\exp \left(
\frac{-xt}{1-t}\right) =\sum\limits_{n=0}^{\infty }P_{n}\left( x\right)
\frac{t^{n}}{n!}\text{ \ \ .}  \label{10}
\end{equation}%
$a_{d-1}\neq 0$ is enough to satisfy the conditions $\left( \ref{8}\right) $%
. These $d$-orthogonal polynomial sets found and studied in \cite{14}.

\begin{remark}
These two polynomial sets $\left( \ref{9}\right) $ and $\left( \ref{10}%
\right) $ are the generalizations of Laguerre polynomials to the $d$%
-orthogonal polynomials since we meet Laguerre polynomials for $d=1$.
\end{remark}

Now, we present a new $d$-orthogonal polynomial set for $d\geq 2$. It seems
that this polynomial set is a Laguerre type $d$-orthogonal polynomial set
but the difference is Laguerre polynomials are not generated hence $d\geq 2$.

\textbf{Application 3: }Suppose that $\left\{ P_{n}\right\} _{n\geq 0}$ is a
Sheffer polynomial set possessing the generating function $\left( \ref{6}%
\right) $ relate to the couple of polynomials
\begin{equation*}
\left[ \gamma _{d}\left( t\right) \text{, }\sigma _{3}\left( t\right) \right]
=\left[ -\left( 1-t\right) ^{3}\pi _{d-2}^{^{\prime }}\left( t\right)
-\left( \alpha +1\right) \left( 1-t\right) ^{2}\text{, }-\left( 1-t\right)
^{3}\right] \text{ \ \ }
\end{equation*}%
where $\pi _{d-2}\left( t\right) =\sum\limits_{k=0}^{d-2}a_{k}t^{k}$ with $%
a_{d-2}\neq 0$. Then, taking Theorem 2.2 into account, we have a new $d$%
-orthogonal polynomial set generated by
\begin{equation*}
e^{\pi _{d-2}\left( t\right) }\left( 1-t\right) ^{-\alpha -1}\exp \left( x%
\frac{t^{2}-2t}{2\left( 1-t\right) ^{2}}\right) =\sum\limits_{n=0}^{\infty
}P_{n}\left( x\right) \frac{t^{n}}{n!}\text{ \ \ }
\end{equation*}%
for $d\geq 2$. Similarly, $a_{d-2}\neq 0$ guarantees the conditions $\left( %
\ref{8}\right) $. Now, we deal with the case $d=2$ i.e.: a new $2$%
-orthogonal polynomial set. Thus%
\begin{equation}
\left( 1-t\right) ^{-\alpha -1}\exp \left( x\frac{t^{2}-2t}{2\left(
1-t\right) ^{2}}\right) =\sum\limits_{n=0}^{\infty }P_{n}\left( x\right)
\frac{t^{n}}{n!}\text{ \ }  \label{11}
\end{equation}%
with conditions $\alpha +n+1\neq 0$ , $n\geq 0$. Before finding the
corresponding linear functionals $u_{0}$ and $u_{1}$ of this new $2$%
-orthogonal polynomial set, we need to state following useful lemma.

\begin{lemma}
$\left( \text{\cite{30}}\right) $ Let $A\left( t\right) $ and $H\left(
t\right) $ be two power series given as in $\left( \ref{3}\right) $ and
\begin{equation*}
H^{\ast }\left( t\right) =\sum\limits_{k=0}^{\infty }h_{k}^{\ast }t^{k+1}
\end{equation*}%
is the compositional inverse of $H\left( t\right) $ such that%
\begin{equation*}
H\left( H^{\ast }\left( t\right) \right) =H^{\ast }\left( H\left( t\right)
\right) =t\text{ \ .}
\end{equation*}%
$\left( i\right) $ The lowering operator $\sigma :=\hat{T}_{x}$ of the
polynomial set $\left\{ P_{n}\right\} _{n\geq 0}$ generated by $\left( \ref%
{3}\right) $ is given with
\begin{equation*}
\sigma =H^{\ast }\left( D\right) \text{ \ \ , \ \ }D=\frac{d}{dx}\text{ \ .}
\end{equation*}%
$\left( ii\right) $ The lowering operator $\sigma :=\hat{T}_{x}$ of the
polynomial set $\left\{ P_{n}\right\} _{n\geq 0}$ generated by%
\begin{equation*}
A\left( t\right) \left( 1+\omega H\left( t\right) \right) ^{\frac{x}{\omega }%
}=\sum\limits_{n=0}^{\infty }P_{n}\left( x\right) \frac{t^{n}}{n!}
\end{equation*}%
is given by
\begin{equation*}
\sigma =H^{\ast }\left( \Delta _{\omega }\right) \text{ \ \ , \ \ \ }\Delta
_{\omega }\left[ f\left( x\right) \right] =\frac{f\left( x+\omega \right)
-f\left( x\right) }{\omega }\text{.}
\end{equation*}
\end{lemma}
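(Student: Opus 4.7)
The plan is, for both parts, to verify directly that the proposed operator $\sigma$ acts on the generating function $G(x,t)$ by multiplication by $t$; since $G(x,t)=\sum_{n\ge 0}P_n(x)\,t^n/n!$, the identity $\sigma G(x,t)=tG(x,t)$ immediately yields $\sigma P_n=nP_{n-1}$, which is precisely the lowering property $\hat T_x G(x,t)=tG(x,t)$ recorded in $(\ref{4})$. The whole argument therefore reduces to identifying, in each case, a ``base'' operator $\Lambda$ acting in the variable $x$ such that $\Lambda G(x,t)=H(t)\,G(x,t)$: once that is in hand, iterating gives $\Lambda^{k}G(x,t)=H(t)^{k}G(x,t)$ and hence $\phi(\Lambda)G(x,t)=\phi(H(t))G(x,t)$ for every formal power series $\phi$; choosing $\phi=H^{\ast}$ produces exactly the factor $t=H^{\ast}(H(t))$ we want.

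For part $(i)$, I would take $\Lambda=D$ and differentiate $G(x,t)=A(t)e^{xH(t)}$ in $x$ to get $DG(x,t)=H(t)G(x,t)$ in one line. The summation-scheme sketched above then gives $H^{\ast}(D)G(x,t)=H^{\ast}(H(t))G(x,t)=tG(x,t)$, and comparing coefficients of $t^{n}/n!$ yields $H^{\ast}(D)P_{n}=nP_{n-1}$.

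For part $(ii)$, the same scheme applies with $\Lambda=\Delta_{\omega}$. The key one-line calculation is
\begin{equation*}
\Delta_{\omega}(1+\omega H(t))^{x/\omega}=\frac{(1+\omega H(t))^{(x+\omega)/\omega}-(1+\omega H(t))^{x/\omega}}{\omega}=H(t)\,(1+\omega H(t))^{x/\omega},
\end{equation*}
so $\Delta_{\omega}G(x,t)=H(t)G(x,t)$. Multiplying by $A(t)$ (which commutes with operators in $x$) the same identity holds for the full $G(x,t)$, and the remainder of the argument is copied verbatim from part $(i)$ to conclude $H^{\ast}(\Delta_{\omega})G(x,t)=tG(x,t)$.

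The main (and essentially only) obstacle is the formal-power-series bookkeeping needed to justify applying an infinite series $\phi(\Lambda)$ term by term to $G(x,t)$ and rearranging it into $\phi(H(t))G(x,t)$. This is where the Sheffer hypothesis $h_{0}\neq 0$ is indispensable: it guarantees that the compositional inverse $H^{\ast}$ exists as a formal power series with no constant term, so every monomial in $H^{\ast}(D)$ or $H^{\ast}(\Delta_{\omega})$ carries a strictly positive power of the base operator. Consequently, when acting on any polynomial in $x$ the series terminates, and on the generating-function side the identity $H^{\ast}(H(t))=t$ is just a formal identity in $t$; no analytical convergence argument is required.
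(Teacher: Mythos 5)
Your proposal is correct and complete. Note that the paper itself offers no proof of this lemma --- it is quoted verbatim from reference \cite{30} --- so there is nothing internal to compare against; your argument is the standard quasi-monomiality verification and it does the job. The two key computations, $D_x\bigl(A(t)e^{xH(t)}\bigr)=H(t)G(x,t)$ and $\Delta_{\omega}(1+\omega H(t))^{x/\omega}=H(t)(1+\omega H(t))^{x/\omega}$, are both right, and passing from $\Lambda G=H(t)G$ to $H^{\ast}(\Lambda)G=H^{\ast}(H(t))G=tG$ is legitimate as a formal-power-series identity: since $H(t)=h_{0}t+\cdots$ with $h_{0}\neq 0$, the term $h_{k}^{\ast}H(t)^{k+1}$ contributes only to orders $\geq k+1$ in $t$, so each coefficient of $t^{n}/n!$ receives finitely many contributions, and on the polynomial side $H^{\ast}(\Lambda)$ truncates because $\Lambda$ lowers degree. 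You correctly identify this bookkeeping as the only point requiring care and dispose of it properly; the final coefficient comparison $\sigma P_{n}=nP_{n-1}$ then matches the defining relation $\hat{T}_{x}G=tG$ in $(\ref{4})$.
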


\begin{theorem}
The polynomial set $\left\{ P_{n}\right\} _{n\geq 0}$ generated by $\left( %
\ref{11}\right) $ are $2$-orthogonal for $\alpha >-1$ with respect to the
following linear functionals%
\begin{eqnarray*}
\left\langle u_{0},f\right\rangle &=&\int\limits_{0}^{\infty }\Psi _{\alpha
,f}\left( x\right) e^{-x}dx\text{ \ \ \ , \ \ }f\in \mathcal{P}\text{ \ ,} \\
\left\langle u_{1},f\right\rangle &=&\int\limits_{0}^{\infty }\left[ \Psi
_{\alpha ,f}\left( x\right) -\Psi _{\alpha +1,f}\left( x\right) \right]
e^{-x}dx\text{ \ \ \ , \ \ }f\in \mathcal{P}\text{ \ ,}
\end{eqnarray*}%
where
\begin{equation*}
\Psi _{\alpha ,f}\left( x\right) =\frac{x^{\alpha }}{\Gamma \left( \alpha
+1\right) }\sum\limits_{k=0}^{\infty }\frac{f^{\left( k\right) }\left(
0\right) }{k!\left( \frac{\alpha +2}{2}\right) _{k}}\left( \frac{x^{2}}{2}%
\right) ^{k}\text{ \ \ \ ,}
\end{equation*}%
$\Gamma $ is the widely known Gamma function and $\left( a\right) _{n}$ is
the Pochhammer's symbol defined by the rising factorial%
\begin{eqnarray*}
\left( a\right) _{n} &=&a\left( a+1\right) ...\left( a+n-1\right) \text{ \ \
, \ \ }n\geq 1\text{ \ ,} \\
\left( a\right) _{0} &=&1\text{ \ \ .}
\end{eqnarray*}
\end{theorem}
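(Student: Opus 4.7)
The plan is to apply Lemma 2.1 directly, since it already produces the explicit form
$$\langle u_{i},f\rangle=\frac{1}{i!}\left[\frac{\sigma^{i}}{A(\sigma)}f(x)\right]_{x=0},\qquad i=0,1,$$
so the whole task reduces to rewriting these operator-at-the-origin expressions as the stated integrals. For the generating function (\ref{11}) one reads off $A(t)=(1-t)^{-\alpha-1}$ and $H(t)=(t^{2}-2t)/(2(1-t)^{2})$. The first concrete step is to invert $H$: the substitution $s=1-t$ converts $H$ into $\tfrac12-\tfrac{1}{2s^{2}}$, and solving for $t$ in terms of $u=H(t)$ gives the clean formula $H^{\ast}(u)=1-(1-2u)^{-1/2}$.

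By Lemma 2.4(i), the lowering operator is then $\sigma=H^{\ast}(D)=1-(1-2D)^{-1/2}$, to be interpreted as a formal series in $D$; no convergence issue arises because $f$ is a polynomial, so only finitely many $D^{k}f$ are nonzero. From $1-\sigma=(1-2D)^{-1/2}$ I obtain
$$\frac{1}{A(\sigma)}=(1-\sigma)^{\alpha+1}=(1-2D)^{-(\alpha+1)/2},\qquad \frac{\sigma}{A(\sigma)}=(1-2D)^{-(\alpha+1)/2}-(1-2D)^{-(\alpha+2)/2}.$$
The binomial series $(1-2D)^{-a}=\sum_{k\geq 0}\frac{(a)_{k}\,2^{k}}{k!}D^{k}$ then yields
$$\langle u_{0},f\rangle=\sum_{k=0}^{\infty}\frac{((\alpha+1)/2)_{k}\,2^{k}}{k!}\,f^{(k)}(0).$$

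The crux is to identify this sum with $\int_{0}^{\infty}\Psi_{\alpha,f}(x)e^{-x}\,dx$. Substituting the series definition of $\Psi_{\alpha,f}$ and using $\int_{0}^{\infty}x^{\alpha+2k}e^{-x}\,dx=\Gamma(\alpha+2k+1)$ (which is exactly why the hypothesis $\alpha>-1$ is needed for convergence near $0$) rewrites the integral as
$$\sum_{k=0}^{\infty}\frac{f^{(k)}(0)\,(\alpha+1)_{2k}}{k!\,((\alpha+2)/2)_{k}\,2^{k}}.$$
The bridging identity is the Pochhammer duplication formula $(\alpha+1)_{2k}=4^{k}\,((\alpha+1)/2)_{k}\,((\alpha+2)/2)_{k}$, obtained by separating the odd and even factors in $(\alpha+1)(\alpha+2)\cdots(\alpha+2k)$. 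It cancels the $((\alpha+2)/2)_{k}$ in the denominator and collapses $4^{k}/2^{k}$ to $2^{k}$, producing exactly the series for $\langle u_{0},f\rangle$ obtained above.

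For $u_{1}$, the same computation with $\alpha$ replaced by $\alpha+1$ gives $[(1-2D)^{-(\alpha+2)/2}f(x)]_{x=0}=\int_{0}^{\infty}\Psi_{\alpha+1,f}(x)e^{-x}\,dx$, so the operator identity for $\sigma/A(\sigma)$ above immediately yields $\langle u_{1},f\rangle=\int_{0}^{\infty}[\Psi_{\alpha,f}-\Psi_{\alpha+1,f}]e^{-x}\,dx$. I expect the only real obstacle to be keeping track of the duplication identity and the various factors of $2^{k}$ in the third step; once $H^{\ast}$ has been written down explicitly, everything else is routine manipulation of Sheffer-type series applied to a polynomial, together with standard Gamma-function evaluations.
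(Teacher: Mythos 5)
Your proposal is correct and follows essentially the same route as the paper: apply the functional formula of Lemma 2.1, compute $\sigma=H^{\ast}(D)=1-(1-2D)^{-1/2}$ via the lowering-operator lemma, expand $(1-2D)^{-a}$ binomially, and match the resulting series to the integral of $\Psi_{\alpha,f}$ through $\int_{0}^{\infty}x^{\alpha+2k}e^{-x}\,dx=\Gamma(\alpha+2k+1)$ and the duplication identity $(\alpha+1)_{2k}=4^{k}\left(\frac{\alpha+1}{2}\right)_{k}\left(\frac{\alpha+2}{2}\right)_{k}$. The only differences are cosmetic: you make explicit the duplication formula and the role of $\alpha>-1$ that the paper leaves implicit (and you cite the lowering-operator lemma as 2.4 rather than 2.5).
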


\begin{proof}
$\left( \ref{5}\right) $ yields that
\begin{equation*}
\left\langle u_{i},f\right\rangle =\frac{1}{i!}\left[ \frac{\sigma ^{i}}{%
A\left( \sigma \right) }f\left( x\right) \right] _{x=0}\text{ \ , \ \ }i=0,1%
\text{ \ , \ }f\in \mathcal{P}\text{ \ ,}
\end{equation*}%
where $\sigma $ is the lowering operator of $2$-orthogonal polynomial set
generated by $\left( \ref{11}\right) $. The lowering operator $\sigma $ of
this polynomial set is
\begin{equation*}
H\left( t\right) =\frac{t^{2}-2t}{2\left( 1-t\right) ^{2}}\Rightarrow \sigma
=H^{\ast }\left( D\right) =1-\left( 1-2D_{x}\right) ^{-1/2}
\end{equation*}%
where we use Lemma 2.5. Then, for $i=0$ and $A\left( t\right) =\left(
1-t\right) ^{-\alpha -1}$, we obtain%
\begin{eqnarray*}
\left\langle u_{0},f\right\rangle &=&\left[ \left( 1-2D_{x}\right) ^{-\left(
\frac{\alpha +1}{2}\right) }f\left( x\right) \right] _{x=0} \\
&=&\sum\limits_{k=0}^{\infty }\frac{\left( \frac{\alpha +1}{2}\right)
_{k}2^{k}}{k!}f^{\left( k\right) }\left( 0\right) \\
&=&\sum\limits_{k=0}^{\infty }\frac{\Gamma \left( \alpha +2k+1\right) }{%
\Gamma \left( \alpha +1\right) \left( \frac{\alpha +2}{2}\right) _{k}2^{k}}%
\frac{f^{\left( k\right) }\left( 0\right) }{k!} \\
&=&\int\limits_{0}^{\infty }\left[ \frac{x^{\alpha }}{\Gamma \left( \alpha
+1\right) }\sum\limits_{k=0}^{\infty }\frac{f^{\left( k\right) }\left(
0\right) }{k!\left( \frac{\alpha +2}{2}\right) _{k}}\left( \frac{x^{2}}{2}%
\right) ^{k}\right] e^{-x}dx\text{ \ \ \ \ .}
\end{eqnarray*}%
Furthermore, we calculate in a similar manner for $i=1$%
\begin{eqnarray*}
\left\langle u_{1},f\right\rangle &=&\left[ \sum\limits_{r=0}^{1}\binom{1}{r%
}\left( -1\right) ^{r}\left( 1-2D_{x}\right) ^{-\left( \frac{\alpha +r+1}{2}%
\right) }f\left( x\right) \right] _{x=0} \\
&=&\sum\limits_{r=0}^{1}\binom{1}{r}\left( -1\right)
^{r}\sum\limits_{k=0}^{\infty }\frac{\left( \frac{\alpha +r+1}{2}\right)
_{k}2^{k}}{k!}f^{\left( k\right) }\left( 0\right) \\
&=&\sum\limits_{r=0}^{1}\binom{1}{r}\left( -1\right)
^{r}\sum\limits_{k=0}^{\infty }\frac{\Gamma \left( \alpha +r+2k+1\right) }{%
\Gamma \left( \alpha +r+1\right) \left( \frac{\alpha +r+2}{2}\right)
_{k}2^{k}}\frac{f^{\left( k\right) }\left( 0\right) }{k!} \\
&=&\int\limits_{0}^{\infty }\left[ \sum\limits_{r=0}^{1}\binom{1}{r}\left(
-1\right) ^{r}\frac{x^{\alpha +r}}{\Gamma \left( \alpha +r+1\right) }%
\sum\limits_{k=0}^{\infty }\frac{f^{\left( k\right) }\left( 0\right) }{%
k!\left( \frac{\alpha +r+2}{2}\right) _{k}}\left( \frac{x^{2}}{2}\right) ^{k}%
\right] e^{-x}dx\text{ \ .}
\end{eqnarray*}%
This finishes the proof.
\end{proof}

$\left( ii\right) $ \textbf{Hermite type }$d$\textbf{-orthogonal polynomial
sets}

Hermite type $d$-orthogonal polynomials were the first example of $d$%
-orthogonal polynomials which obtained constructively by Douak \cite{16}. He
discovered these polynomials as solution of the problem: Find all $d$%
-orthogonal polynomial sets which are at the same time Appell polynomials.

\textbf{Application 4: }Let $\left\{ P_{n}\right\} _{n\geq 0}$ be a Sheffer
polynomial set due to the generating function $\left( \ref{6}\right) $ and
the following couple of polynomials
\begin{equation*}
\left[ \gamma _{d}\left( t\right) \text{, }\sigma _{0}\left( t\right) \right]
=\left[ \pi _{d+1}^{^{\prime }}\left( t\right) \text{, }1\right]
\end{equation*}%
where $\pi _{d+1}\left( t\right) =\sum\limits_{k=0}^{d+1}a_{k}t^{k}$ with $%
a_{d+1}\neq 0$. Theorem 2.2 allows us to present a $d$-orthogonal set
generated by
\begin{equation}
e^{\pi _{d+1}\left( t\right) }\exp \left( xt\right)
=\sum\limits_{n=0}^{\infty }P_{n}\left( x\right) \frac{t^{n}}{n!}\text{ \ \
}  \label{12}
\end{equation}%
under the conditions $a_{d+1}\neq 0$ from $\left( \ref{8}\right) $. The only
polynomials which are $d$-orthogonal and Appell polynomials at the same time
are generated by $\left( \ref{12}\right) $. Also, these polynomial sets are
the generalization of Gould-Hopper polynomials \cite{32}. For $d=1$, we meet
again the Hermite polynomials. The properties of these polynomial sets were
intensively studied by Douak in \cite{16}.

$\left( iii\right) $ \textbf{Charlier type }$d$\textbf{-orthogonal
polynomial sets}

Also, Charlier polynomials extended to the notion of $d$-orthogonality. Now,
we revisit these polynomial sets owing to Theorem 2.2.

\textbf{Application 5: }Suppose that $\left\{ P_{n}\right\} _{n\geq 0}$ is a
Sheffer polynomial set generated by $\left( \ref{6}\right) $ associated with
the couple of polynomials
\begin{equation*}
\left[ \gamma _{d}\left( t\right) \text{, }\sigma _{1}\left( t\right) \right]
=\left[ \left( 1+\omega t\right) \pi _{d}^{^{\prime }}\left( t\right) \text{%
, }1+\omega t\right]
\end{equation*}%
where $\pi _{d}\left( t\right) =\sum\limits_{k=0}^{d}a_{k}t^{k}$ with $%
a_{d}\neq 0$. From Theorem 2.2, we have a $d$-orthogonal polynomial set of the
form
\begin{equation}
e^{\pi _{d}\left( t\right) }\left( 1+\omega t\right) ^{\frac{x}{\omega }%
}=\sum\limits_{n=0}^{\infty }P_{n}\left( x\right) \frac{t^{n}}{n!}\text{ \
\ }  \label{13}
\end{equation}%
with the conditions $a_{d}\neq 0$ from $\left( \ref{8}\right) $. The $d$%
-orthogonal polynomial set generated by $\left( \ref{13}\right) $ was found
in \cite{13}. A similar characterization problem stated in $\left( ii\right)
$ for discrete case was solved by the authors. It is obvious that the
generating function relation $\left( \ref{13}\right) $ yields Charlier
polynomials for $\left( d,\omega \right) =\left( 1,1\right) $.

$\left( iii\right) $ \textbf{Meixner type }$d$\textbf{-orthogonal polynomial
sets}

Another important member of discrete orthogonal polynomial sets called
Meixner polynomials were also generalized in the context of $d$%
-orthogonality \cite{14}. The authors discovered these polynomials by means
of a form of generating function and they found the linear functions $u_{0}$
and $u_{1}$ for the case $d=2$. Following application of Theorem 2.2 shows
that these polynomial sets can be obtained by the special case of the couple
of polynomials $\left[ \gamma _{d}\left( t\right) \text{, }\sigma
_{d+1}\left( t\right) \right] $.

\textbf{Application 6: }Assume that $\left\{ P_{n}\right\} _{n\geq 0}$ is a
Sheffer polynomial set having the generating function of the type $\left( %
\ref{6}\right) $ for the couple of polynomials
\begin{equation*}
\left[ \gamma _{d}\left( t\right) \text{, }\sigma _{2}\left( t\right) \right]
=\left[ \frac{1}{c-1}\left( c-t\right) \left( 1-t\right) \pi
_{d-1}^{^{\prime }}\left( t\right) +\frac{\beta }{c-1}\left( c-t\right)
\text{, }\frac{1}{c-1}\left( c-t\right) \left( 1-t\right) \right] \text{ .}
\end{equation*}%
Here $\pi _{d-1}\left( t\right) =\sum\limits_{k=0}^{d-1}a_{k}t^{k}$ with $%
a_{d-1}\neq 0$ and $c\neq \left\{ 0,1\right\} $. Thanks to Theorem 2.2 and
this couple of polynomials, $\left( \ref{6}\right) $ generates the $d$%
-orthogonal polynomial sets given below%
\begin{equation}
e^{\pi _{d-1}\left( t\right) }\left( 1-t\right) ^{-\beta }\left( 1+\frac{c-1%
}{c}\frac{t}{1-t}\right) ^{x}=\sum\limits_{n=0}^{\infty }P_{n}\left(
x\right) \frac{t^{n}}{n!}\ \ \text{.}  \label{14}
\end{equation}%
$a_{d-1}\neq 0$ and $c\neq \left\{ 0,1\right\} $ are sufficient enough the
conditions $\left( \ref{8}\right) $ hold true. One can find detailed
information of these polynomial sets in \cite{14}. It is easily seen that we
face with the Meixner polynomial set by taking $d=1$ in $\left( \ref{14}%
\right) $. Recently, a generalization of $d$-orthogonal Meixner polynomial
sets via quantum calculus has been given in \cite{31}. Next, we express a
new Meixner type $d$-orthogonal polynomial set and we \ find its $d$%
-dimensional functional vector.

\textbf{Application 7: }Let\textbf{\ }$\left\{ P_{n}\right\} _{n\geq 0}$ be
a Sheffer polynomial set generated by $\left( \ref{6}\right) $ according to
the couple of polynomials
\begin{eqnarray*}
\left[ \gamma _{d}\left( t\right) \text{, }\sigma _{d+1}\left( t\right) %
\right]  &=&\left[ \frac{dc\beta }{c-1}\left[ \left( 1-t\right) ^{d}+\frac{%
c-1}{dc}\left[ 1-\left( 1-t\right) ^{d}\right] \right] ,\right.  \\
&&\left. \frac{c}{c-1}\left( 1-t\right) \left[ \left( 1-t\right) ^{d}+\frac{%
c-1}{dc}\left[ 1-\left( 1-t\right) ^{d}\right] \right] \right]
\end{eqnarray*}%
with the restrictions
\begin{equation}
\left\{
\begin{array}{c}
c\neq \left\{ 0,\frac{1}{1-d},1\right\}  \\
\beta \neq -\frac{n}{d}\text{ \ , \ }n\geq 0\text{ }%
\end{array}%
\right. \text{ \ .}  \label{15}
\end{equation}%
After some computations, Theorem 2.2 allows us to introduce the following new $%
d$-orthogonal polynomial set with
\begin{equation}
\left( 1-t\right) ^{-\beta d}\left( 1+\frac{c-1}{dc}\left[ \frac{1}{\left(
1-t\right) ^{d}}-1\right] \right) ^{x}=\sum\limits_{n=0}^{\infty
}P_{n}\left( x\right) \frac{t^{n}}{n!}\ \ \text{.}  \label{16}
\end{equation}%
The conditions $\left( \ref{8}\right) $ are satisfied from restrictions $%
\left( \ref{15}\right) $. It seems that this Meixner type $d$-orthogonal
polynomial set is the first explicit one among others in the literature.

\begin{theorem}
The $d$-dimensional functional vectors, which the $d$-orthogonality of the
polynomial set generated by $\left( \ref{16}\right) $ holds, are
\begin{equation}
\left\langle u_{r},f\right\rangle =\frac{1}{r!}\sum\limits_{i=0}^{r}\binom{r%
}{i}\left( -1\right) ^{i}\sum\limits_{j=0}^{\infty }\frac{\left( \beta +%
\frac{i}{d}\right) _{j}\left( \frac{dc}{1-c}\right) ^{j}}{\left( 1-\frac{dc}{%
c-1}\right) ^{\beta +\frac{i}{d}+j}j!}f\left( j\right)  \label{19}
\end{equation}%
where $r=0,1,...,d-1$ and $f\in \mathcal{P}$.
\end{theorem}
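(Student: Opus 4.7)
The plan is to apply formula (5) of Lemma 2.1 in tandem with part (ii) of Lemma 2.5, since the base $(1+\tfrac{c-1}{dc}[(1-t)^{-d}-1])^{x}$ in (16) signals a \emph{discrete} Sheffer structure rather than the continuous one used in Theorem 2.6. First I would recast (16) in the form $A(t)(1+\omega H(t))^{x/\omega}$ required by Lemma 2.5(ii) with the natural choice $\omega=1$, $A(t)=(1-t)^{-\beta d}$, and $H(t)=\tfrac{c-1}{dc}[(1-t)^{-d}-1]$; the hypotheses (15) guarantee $H(0)=0$ and $H'(0)=\tfrac{c-1}{c}\neq 0$. Solving $s=H(t)$ for $t$ gives the compositional inverse $H^{*}(s)=1-(1+\tfrac{dc}{c-1}s)^{-1/d}$, so Lemma 2.5(ii) delivers the lowering operator $\sigma=1-(1+\lambda\Delta_{1})^{-1/d}$, where $\lambda:=\tfrac{dc}{c-1}$ and $\Delta_{1}f(x)=f(x+1)-f(x)$.

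Next I would plug $\sigma$ into (5). The convenient algebraic fact is that the fractional exponent $1/d$ in $\sigma$ is exactly compensated by the factor $\beta d$ in $A$: one obtains $1/A(\sigma)=(1-\sigma)^{\beta d}=(1+\lambda\Delta_{1})^{-\beta}$, while the binomial expansion of $\sigma^{r}=[1-(1+\lambda\Delta_{1})^{-1/d}]^{r}$ yields $\sigma^{r}/A(\sigma)=\sum_{i=0}^{r}\binom{r}{i}(-1)^{i}(1+\lambda\Delta_{1})^{-\beta-i/d}$, which already matches the outer $i$-sum in (19).

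The remaining, and most delicate, step is to convert $[(1+\lambda\Delta_{1})^{-\beta-i/d}f](0)$ into an explicit series in the sampled values $f(j)$. Writing $1+\lambda\Delta_{1}=(1-\lambda)+\lambda E$, where $E$ is the unit shift $Ef(x)=f(x+1)$, I would factor out $(1-\lambda)^{-\beta-i/d}$ and expand the remaining $(1+\tfrac{\lambda}{1-\lambda}E)^{-\beta-i/d}$ by the generalized binomial series, whose $j$-th coefficient is $(-1)^{j}(\beta+i/d)_{j}/j!$. Applying $E^{j}f(0)=f(j)$ and absorbing the sign $(-1)^{j}$ into the identity $-\lambda=\tfrac{dc}{1-c}$ should produce the inner $j$-sum of (19) verbatim.

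The main obstacle is bookkeeping rather than conceptual: one must carefully track how the fractional exponents, the alternating signs, and the denominators $(1-\lambda)^{\beta+i/d+j}$ all conspire to reproduce (19) exactly. The restrictions (15) intervene precisely at this stage: $c\neq 0,1$ makes $\lambda$ well-defined and nonzero, $c\neq \tfrac{1}{1-d}$ is equivalent to $1-\lambda\neq 0$ and so keeps the denominators finite, and $\beta\neq -n/d$ is the translation of the regularity condition (8) that was already needed to assert $d$-orthogonality in the first place.
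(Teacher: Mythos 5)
Your proposal is correct, and its first two stages coincide exactly with the paper's proof: both use Lemma 2.5(ii) to obtain the lowering operator $\sigma = 1-\bigl(1-\tfrac{dc\Delta}{1-c}\bigr)^{-1/d}$ (your $1+\lambda\Delta_1$ with $\lambda=\tfrac{dc}{c-1}$ is the same operator), and both then insert it into formula (5) and expand $\sigma^{r}/A(\sigma)=\sum_{i=0}^{r}\binom{r}{i}(-1)^{i}\bigl(1+\lambda\Delta\bigr)^{-\beta-i/d}$. Where you genuinely diverge is the last step. The paper expands $\bigl(1+\lambda\Delta\bigr)^{-\beta-i/d}$ as a binomial series in powers of $\Delta$, substitutes $\Delta^{k}f(0)=\sum_{j=0}^{k}(-1)^{k-j}\binom{k}{j}f(j)$, interchanges the order of summation to get a double series, and then resums the inner $k$-series using the Pochhammer identity $\bigl(\beta+\tfrac{i}{d}\bigr)_{k+j}=\bigl(\beta+\tfrac{i}{d}\bigr)_{j}\bigl(\beta+\tfrac{i}{d}+j\bigr)_{k}$ together with $\sum_{k}\frac{(a)_{k}}{k!}z^{k}=(1-z)^{-a}$ to produce the denominator $\bigl(1-\tfrac{dc}{c-1}\bigr)^{\beta+i/d+j}$. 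You instead write $1+\lambda\Delta=(1-\lambda)+\lambda E$ and expand once in powers of the shift $E$, which lands on (19) in a single binomial expansion with no double sum, no index shift, and no Pochhammer identity. Both routes are formally equivalent (your single expansion is precisely what the paper's resummation reconstructs), and yours is the more economical bookkeeping; the only thing to be aware of is that both arguments are formal term-by-term manipulations of an infinite series in the sampled values $f(j)$, so neither gains or loses rigor relative to the other. Your reading of the restrictions (15) — $c\neq\tfrac{1}{1-d}$ keeping $1-\lambda\neq 0$, and $\beta\neq -n/d$ coming from (8) — is also consistent with the paper.
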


\begin{proof}
Lemma 2.5 helps us to find the lowering operators of the $d$-orthogonal
polynomial set generated by $\left( \ref{16}\right) $ with
\begin{equation*}
H\left( t\right) =\frac{c-1}{dc}\left[ \frac{1}{\left( 1-t\right) ^{d}}-1%
\right] \Rightarrow \sigma =H^{\ast }\left( \Delta \right) =1-\left( 1-\frac{%
dc\Delta }{1-c}\right) ^{-\frac{1}{d}}
\end{equation*}%
where $\Delta f\left( x\right) =f\left( x+1\right) -f\left( x\right) $.
Thus, by using Lemma 2.1 and $\left( \ref{5}\right) $, we conclude that for $%
r=0,1,...,d-1$ and $f\in \mathcal{P}$%
\begin{eqnarray}
\left\langle u_{r},f\right\rangle &=&\frac{1}{r!}\left[ \frac{\sigma ^{r}}{%
A\left( \sigma \right) }f\left( x\right) \right] _{x=0}  \notag \\
&=&\frac{1}{r!}\left[ \sum\limits_{i=0}^{r}\binom{r}{i}\left( -1\right)
^{i}\left( 1-\frac{dc\Delta }{1-c}\right) ^{-\left( \beta +\frac{i}{d}%
\right) }f\left( x\right) \right] _{x=0}  \notag \\
&=&\frac{1}{r!}\sum\limits_{i=0}^{r}\binom{r}{i}\left( -1\right)
^{i}\sum\limits_{k=0}^{\infty }\frac{\left( \beta +\frac{i}{d}\right) _{k}}{%
k!}\left( \frac{dc}{1-c}\right) ^{k}\Delta ^{k}f\left( 0\right) \text{ \ \ .}
\label{17}
\end{eqnarray}%
Substituting the fact
\begin{equation*}
\Delta ^{k}f\left( 0\right) =\sum\limits_{j=0}^{k}\left( -1\right) ^{k-j}%
\binom{k}{j}f\left( j\right)
\end{equation*}%
into $\left( \ref{17}\right) $ and after shifting indices, we obtain%
\begin{equation}
\left\langle u_{r},f\right\rangle =\frac{1}{r!}\sum\limits_{i=0}^{r}\binom{r%
}{i}\left( -1\right) ^{i}\sum\limits_{j=0}^{\infty }\left\{
\sum\limits_{k=0}^{\infty }\frac{\left( \beta +\frac{i}{d}\right)
_{k+j}\left( -1\right) ^{k}\left( \frac{dc}{1-c}\right) ^{k}}{k!}\right\}
\frac{\left( \frac{dc}{1-c}\right) ^{j}f\left( j\right) }{j!}\text{ .}
\label{18}
\end{equation}%
The equality $\left( \ref{18}\right) $ leads us to get the desired result by
applying the following property of the Pochhammer's symbol%
\begin{equation*}
\left( \beta +\frac{i}{d}\right) _{k+j}=\left( \beta +\frac{i}{d}\right)
_{j}\left( \beta +\frac{i}{d}+j\right) _{k}\text{ \ \ .}
\end{equation*}
\end{proof}

\begin{remark}
For $d=1$, $\left( \ref{16}\right) $ reduces to the well known generating
function of Meixner polynomial set and $\left( \ref{19}\right) $ becomes the
following linear functional for Meixner polynomials
\begin{equation}
\left\langle u_{0},f\right\rangle =\left( 1-c\right) ^{\beta
}\sum\limits_{j=0}^{\infty }\frac{\left( \beta \right) _{j}\text{ }c^{j}}{j!%
}f\left( j\right)  \label{20}
\end{equation}%
with $0<c<1$ and $\beta >0$. Meixner polynomial set is orthogonal with
respect to the linear functional given by $\left( \ref{20}\right) $.
\end{remark}

\textbf{Application 8: }Suppose that $\left\{ P_{n}\right\} _{n\geq 0}$ is a
Sheffer polynomial set represented by $\left( \ref{6}\right) $ associated to
the couple of polynomials
\begin{eqnarray*}
\left[ \gamma _{d}\left( t\right) \text{, }\sigma _{3}\left( t\right) \right]
&=&\left[ -\frac{c}{c-1}\left( 1-t\right) \left[ \left( 1-t\right) ^{2}+%
\frac{c-1}{2c}\left[ \left( 1-t\right) ^{2}-1\right] \right] \pi
_{d-2}^{^{\prime }}\left( t\right) \right. \\
&&\left. -\frac{\beta c}{c-1}\left[ \left( 1-t\right) ^{2}+\frac{c-1}{2c}%
\left[ \left( 1-t\right) ^{2}-1\right] \right] ,\right. \\
&&\left. -\frac{c}{c-1}\left( 1-t\right) \left[ \left( 1-t\right) ^{2}+\frac{%
c-1}{2c}\left[ \left( 1-t\right) ^{2}-1\right] \right] \right]
\end{eqnarray*}%
where $\pi _{d-2}\left( t\right) =\sum\limits_{k=0}^{d-2}a_{k}t^{k}$ with $%
a_{d-2}\neq 0$ and $c\neq \left\{ 0,\frac{1}{3},1\right\} $. Taking Theorem
2.2 into account, we derive a new $d$-orthogonal polynomial set for $d\geq 2$
with%
\begin{equation}
e^{\pi _{d-2}\left( t\right) }\left( 1-t\right) ^{-\beta }\left( 1+\frac{c-1%
}{2c}\frac{t^{2}-2t}{\left( 1-t\right) ^{2}}\right)
^{x}=\sum\limits_{n=0}^{\infty }P_{n}\left( x\right) \frac{t^{n}}{n!}
\label{21}
\end{equation}%
The conditions $\left( \ref{8}\right) $ are satisfied since $a_{d-2}\neq 0$
and $c\neq \left\{ 0,\frac{1}{3},1\right\} $. These $d$-orthogonal
polynomial sets $\left( \ref{21}\right) $ can not generate an orthogonal
polynomial set since $d\geq 2$. But for $d=2$, one can study the properties
of this Meixner type $2$-orthogonal polynomial set.

\subsection{Concluding Remarks}
Theorem 2.2 is the generalization of the characterization problem related to
the orthogonality of Sheffer polynomial set. The version of this problem
corresponding to $d=1$ and $d=2$ already exist in the literature. Then, it
is expected to find similar results for $d$-orthogonality. Although the
results obtained in Theorem 2.2 are expected and natural, this theorem
motivates us to derive new $d$-orthogonal polynomial sets as mentioned in
this paper. One can generate more $d$-orthogonal polynomial sets which are
Sheffer polynomial set at the same time with the help of Theorem 2.2.

\end{document}